\def \vmo{\operatorname{VMO}}
\def \bmo{\operatorname{BMO}}
\def \w{\omega}
\def \R{\mathbb{R}}
\newtheorem{theorem}{Theorem}
\newtheorem{lemma}[theorem]{Lemma}
\newtheorem{proposition}[theorem]{Proposition}
\newtheorem{claim}{Claim}
\theoremstyle{definition}
\newtheorem{remark}{Remark}
\numberwithin{equation}{section}
\begin{document}
\allowdisplaybreaks[4]
\title[A description of $A_{\infty}$-weights for VMO]{A description of $A_{\infty}$-weights for VMO}

\author{Jinsong Liu\textsuperscript{1,2} $\&$ Fei Tao\textsuperscript{1,2} $\&$ Huaying Wei\textsuperscript{3}}
\address{$1.$ HLM, Academy of Mathematics and Systems Science,
Chinese Academy of Sciences, Beijing, 100190, China}
\address{$2.$ School of
Mathematical Sciences, University of Chinese Academy of Sciences,
Beijing, 100049, China }
\address{$3.$ Department of Mathematics and Statistics, Jiangsu Normal University, Xuzhou, 221116, China}
\email{liujsong@math.ac.cn,\:ferrytau@amss.ac.cn,\:hywei@jsnu.edu.cn}
\thanks{Jinsong Liu is supported by National Key R$\&$D Program of China (Grant No. 2021YFA1003100), NSFC (Grant No. 11925107), Key Research Program of Frontier Sciences, CAS (Grant No. ZDBS-LY-7002); Huaying Wei is supported by the National Natural Science Foundation of China (Grant No. 12271218).}
\subjclass[2020]{Primary 30C62, 30H35; Secondary 26A46, 37E10}

\keywords{doubling weight, Muckenhoupt weight, BMO space, VMO space, Carleson measure}

\begin{abstract}
We present a new characterization of Muckenhoupt $A_{\infty}$-weights whose logarithm is in $\vmo(\R)$ in terms of vanishing Carleson measures on $\R_+^2$ and vanishing doubling weights on $\R$. This also gives a novel description of strongly symmetric homeomorphisms on the real line by using a geometric quantity.
\end{abstract}

\maketitle

\section{Introduction}
A locally integrable non-negative measurable function $\omega$ on $\mathbb R$ is called a \textit{weight}. We say that $\w$ is a \textit{doubling weight} if there exists a positive constant $\rho$ such that
$$
\rho^{-1}\w(J) \leq \w(I) \leq \rho \w(J)
$$
for any adjacent bounded intervals $I$ and $J$ of length $|I| = |J|$. Here, $\w(I) = \int_I\w(x) dx$. We call the optimal value of such $\rho$ the doubling constant for $\w$. Moreover, a doubling weight $\w$ is called \textit{vanishing} if $\w(I)/\w(J) \to 1$ as $|I| = |J| \to 0$. We say that $\w$ is a \textit{Muckenhoupt $A_{\infty}$-weight} (abbreviated to $A_{\infty}$-weight) (see \cite[C.6]{Ga}) if for any $\varepsilon > 0$ there exists some $\delta > 0$ such that
$$
|E| \leq \delta |I| \Rightarrow \w(E) \leq \varepsilon \w(I)
$$
whenever $I \subset \R$ is a bounded interval and $E\subset I$ a measurable subset. Naturally an $A_{\infty}$-weight is doubling. Fefferman and Muckenhoupt \cite{FM} gave this a direct computation, and they also provided an example of a function that satisfies the doubling condition but not $A_{\infty}$.

For a weight function $\w$ on the real line $\R$, we define a sense-preserving homeomorphism $h: \R \to \R$ by $h(x) = h(0) + \int_0^x \w(t) dt$. The homeomorphism $h$ is called \textit{strongly quasisymmetric} if $\w$ is an $A_{\infty}$-weight. In particular, $\log\w \in \bmo(\R)$, the space of functions of bounded mean oscillation on the real line (see Section 2 for precise definition). This subclass of quasisymmetric homeomorphisms and its Teichm\"uller space were much investigated (see \cite{AZ, CZ, FKP, Se}) because of their great importance in the application to harmonic analysis and elliptic operator theory (see \cite{Dah, FKP, Jo, Se1}). In particular, it was proved that a sense-preserving homeomorphism $h$ is strongly quasisymmetric if and only if it can be extended to a quasiconformal homeomorphism of $\R_+^2$ onto itself whose Beltrami coefficient $\mu$ induces a Carleson measure $|\mu(z)|^2y^{-1}dxdy$ on $\R_+^2$. Moreover, a strongly quasisymmetric homeomorphism $h$ is said to be \textit{strongly symmetric} if the $A_{\infty}$-weight $\w$ satisfies that $\log\w \in \vmo(\R)$, the space of functions of vanishing mean oscillation on the real line (see Section 2 for precise definition). This class was investigated further in \cite{WM, WM1}. In particular, it was proved that $h$ is strongly symmetric if and only if it can be extended to a quasiconformal homeomorphism of $\R_+^2$ onto itself whose Beltrami coefficient $\mu$ induces a vanishing Carleson measure $|\mu(z)|^2y^{-1}dxdy$ on $\R_+^2$.

Here, a positive measure $\lambda(x, y) d x d y$ on $\mathbb{R}_+^2$ is called a \textit{Carleson measure} if
\[
\|\lambda\|_{c}^{1 / 2}=\sup \frac{1}{|I|} \int_{0}^{|I|} \int_{I} \lambda(x, y) d x d y <\infty,
\]
where the supremum is taken over all bounded intervals $I \subset \mathbb R$. 
Furthermore, if a Carleson measure $\lambda$ satifies
$$\lim _{|I| \to 0} \frac{1}{|I|} \int_{0}^{|I|} \int_{I} \lambda(x, y) d x d y=0,$$
we call $\lambda$ a \textit{vanishing Carleson measure}.

The purpose of the present paper is to give a new description of strongly symmetric homeomorphisms without using quasiconformal extensions (see Theorem \ref{main} below). Before stating the theorem we need to introduce some notations. For $z = (x, y) \in \mathbb R_+^2$,
let $I_z = \{s\mid \, |s - x| < y/2\}$, and let $I_z^-$ and $I_z^+$ be the left and right half of the interval $I_z$, respectively. For a doubling weight $\omega$ on $\mathbb R$, we introduce a geometric quantity
$$
\frac{2^{-1}\omega(I_z)}{\omega(I_z^{+})^{\frac{1}{2}}\omega(I_z^{-})^{\frac{1}{2}}},
$$
which is bigger or equal to $1$ since it is 
actually the ratio between the arithmetic mean and the geometric mean of the densities of $\omega$ over the intervals $I_z^{+}$ and $I_z^{-}$, that is $\omega\left(I_z^{+}\right) /\left|I_z^{+}\right|$ and $\omega\left(I_z^{-}\right) /\left|I_z^{-}\right|$. Then we define a nonnegative quantity 
$$
\eta (z) = \log\frac{2^{-1}\omega(I_z)}{\omega(I_z^{+})^{\frac{1}{2}}\omega(I_z^{-})^{\frac{1}{2}}}.
$$

In \cite{GN}, it was shown that the doubling weight $\w$ is an $A_{\infty}$-weight if and only if the positive measure $\eta(z)y^{-1} dxdy$ is a Carleson measure on $\mathbb R_{+}^{2}$. This expresses the close connection between $A_{\infty}$-weights (or strongly quasisymmetric homeomorphisms) and Carleson measures. We consider to what extent one can extend this result to $A_{\infty}$-weights whose logarithm is in $\vmo(\R)$ (or strongly symmetric homeomorphisms) and prove the following.

\begin{theorem}\label{main}
For an $A_{\infty}$-weight $\omega$ on $\mathbb R$, the function $\log\omega \in \rm VMO(\mathbb R)$ if and only if the Carleson measure $\eta(z)y^{-1} dxdy$ is vanishing on $\mathbb R_{+}^{2}$ and the doubling weight $\omega$ is vanishing on $\mathbb R$.
\end{theorem}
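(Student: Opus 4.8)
The plan is to reduce both sides of the equivalence to a single scalar attached to each bounded interval $I$, the \emph{entropy}
$$
\operatorname{ent}(I):=\log\frac{\omega(I)}{|I|}-\frac1{|I|}\int_I\log\omega\ \ge 0,
$$
and to prove separately that (a) $\operatorname{ent}(I)\to 0$ as $|I|\to 0$ is equivalent to $\log\omega\in\vmo(\R)$, and (b) the Carleson mass of $\eta(z)y^{-1}dxdy$ over a box $T(I)=I\times(0,|I|)$ is comparable to $|I|\operatorname{ent}(I)$, so that the vanishing Carleson condition on $\eta$ is equivalent to $\operatorname{ent}(I)\to0$. The starting point for (b) is the elementary identity, valid for any bounded $I$ with left/right halves $I^{-},I^{+}$,
$$
\operatorname{ent}(I)=\tfrac12\operatorname{ent}(I^{+})+\tfrac12\operatorname{ent}(I^{-})+\eta_I,\qquad
\eta_I:=\log\frac{2^{-1}\omega(I)}{\omega(I^{+})^{1/2}\omega(I^{-})^{1/2}}\ge 0,
$$
which follows from $\omega(I)/|I|=\tfrac12(\omega(I^{+})/|I^{+}|+\omega(I^{-})/|I^{-}|)$ and $\tfrac1{|I|}\int_I\log\omega=\tfrac12(\tfrac1{|I^{+}|}\int_{I^{+}}\log\omega+\tfrac1{|I^{-}|}\int_{I^{-}}\log\omega)$. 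Iterating it down the dyadic tree generated by $I$ and passing to the limit — the remainder equals $\tfrac1{|I|}\int_I\operatorname{ent}(Q_n(x))\,dx$, where $Q_n(x)$ is the generation-$n$ descendant of $I$ containing $x$, and tends to $0$ by the Lebesgue differentiation theorem applied to $\omega$ and to $\log\omega$ together with dominated convergence, since $\operatorname{ent}$ is bounded on an $A_\infty$-weight — yields the exact telescoping $\operatorname{ent}(I)=\tfrac1{|I|}\sum_{Q\subseteq I}|Q|\,\eta_Q$. Comparing this discrete Carleson sum with the continuous mass $\tfrac1{|I|}\int_0^{|I|}\int_I\eta\,y^{-1}dxdy$ — essentially the content of \cite{GN} in the non-vanishing case — gives (b); I would run that comparison while tracking the quantitative dependence so it propagates to the vanishing statement, which is the delicate point discussed below.

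For (a), one direction is elementary: with $g=\log\omega-\tfrac1{|I|}\int_I\log\omega$ (so $\int_I g=0$ and $\tfrac1{|I|}\int_I e^{g}=e^{\operatorname{ent}(I)}$), the inequality $e^{t}-1-t\ge t^{2}/2$ for $t\ge 0$ gives $\tfrac1{|I|}\int_{\{g\ge0\}}g^{2}\le 2(e^{\operatorname{ent}(I)}-1)$, whence by Cauchy–Schwarz and $\int_I g=0$ one gets $\operatorname{MO}(\log\omega,I)=\tfrac1{|I|}\int_I|g|\le 2\sqrt{2(e^{\operatorname{ent}(I)}-1)}\to 0$; thus $\operatorname{ent}(I)\to0$ forces $\log\omega\in\vmo(\R)$. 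For the converse I would use Sarason's description of $\vmo$ to split $\log\omega=b_1+b_2$ with $b_1$ bounded and uniformly continuous and $\|b_2\|_{\bmo}<\varepsilon_0$, and estimate
$$
e^{\operatorname{ent}(I)}=\frac1{|I|}\int_I e^{\log\omega-\frac1{|I|}\int_I\log\omega}
\le\Bigl(\frac1{|I|}\int_I e^{2(b_1-(b_1)_I)}\Bigr)^{1/2}\Bigl(\frac1{|I|}\int_I e^{2(b_2-(b_2)_I)}\Bigr)^{1/2},
$$
bounding the first factor by $1+o(1)$ as $|I|\to 0$ (uniform continuity) and the second by $1+C\varepsilon_0^{2}$ (small-norm John–Nirenberg), then letting $|I|\to0$ and $\varepsilon_0\to 0$. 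Hence (a). In particular this settles ``$\log\omega\in\vmo\Rightarrow\eta$ vanishing Carleson''; the vanishing doubling of $\omega$ also follows, since for adjacent intervals $I,J$ of equal length $\ell\to 0$,
$$
\log\frac{\omega(I)}{\omega(J)}=\Bigl(\frac1{|I|}\int_I\log\omega-\frac1{|J|}\int_J\log\omega\Bigr)+\operatorname{ent}(I)-\operatorname{ent}(J),
$$
and the first difference is at most $4\,\operatorname{MO}(\log\omega,I\cup J)\to0$ while $\operatorname{ent}(I),\operatorname{ent}(J)\to0$. This completes ``$\Rightarrow$'', and reduces ``$\Leftarrow$'' to deducing $\operatorname{ent}(I)\to 0$ from the two vanishing hypotheses.

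The remaining point — and what I expect to be the main obstacle — is the equivalence, \emph{in the vanishing regime}, between the continuous Carleson mass $\tfrac1{|I|}\int_{T(I)}\eta\,y^{-1}dxdy$ and the discrete quantity $\operatorname{ent}(I)$ as $|I|\to0$. One direction is a telescoping-in-scale estimate: applying $\eta_{I_z}=\operatorname{ent}(I_z)-\tfrac12\operatorname{ent}(I_z^{+})-\tfrac12\operatorname{ent}(I_z^{-})$ to the off-grid intervals $I_z$, $z=(x,y)\in T(I)$, and grouping $T(I)$ into dyadic scale-annuli, the $\eta$-integral telescopes in the scale variable up to harmless translation errors, giving $\tfrac1{|I|}\int_{T(I)}\eta\,y^{-1}dxdy\lesssim\sup_{|J|\le|I|}\operatorname{ent}(J)\to0$ once $\operatorname{ent}\to0$. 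The reverse inequality is the hard one: it forces a comparison between $\eta$ on an off-grid interval $I_z$ and $\eta$ on nearby dyadic intervals, and since in this regime $\eta$ is \emph{quadratically} small in the density ratio of the two half-intervals, the $O(1)$-additive mismatch in the logarithm that is harmless for the (non-vanishing) statement of \cite{GN} is no longer negligible relative to $\eta$ itself. This is precisely where the vanishing doubling of $\omega$ enters — it forces those mismatches to be $o(1)$ as the scale tends to $0$ — and is the reason the hypothesis appears in the theorem. Granting this comparison, the vanishing Carleson hypothesis yields $\operatorname{ent}(I)\to 0$, hence $\log\omega\in\vmo(\R)$ by (a), finishing ``$\Leftarrow$''.
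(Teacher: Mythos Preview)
Your reduction to the entropy functional $\operatorname{ent}(I)=\log\omega_I-(\log\omega)_I$ is exactly the paper's reduction: their term $A_4$ is $\log 2\cdot\operatorname{ent}(I(x_0,t))$, and your part~(a) is their Proposition~\ref{limit} (proved there via a localized John--Nirenberg estimate and a Sarason-type measure argument rather than your Sarason decomposition, but to the same end). The dyadic telescoping $\operatorname{ent}(I)=|I|^{-1}\sum_{Q\subset I}|Q|\,\eta_Q$ is a clean observation the paper does not isolate, and your direct derivation of vanishing doubling from $\operatorname{ent}\to0$ is also nicer than the paper's, which just cites Remark~\ref{examin}.

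Where the executions diverge is part~(b). The paper never compares the discrete sum $\sum_Q|Q|\eta_Q$ to the continuous Carleson mass. Instead it manipulates the integral $A(x_0,t)$ directly: splitting $\int_0^t$ into dyadic scale annuli, rescaling, and telescoping the resulting series in $k$, it obtains an \emph{identity} $A(x_0,t)=\widehat A_1+\lim_N A_3+\log 2\cdot\operatorname{ent}(I)$, where $\widehat A_1$ and $A_3$ are integrals of logarithms of \emph{ratios of $\omega$-masses of comparable intervals} (no averages of $\log\omega$ appear). Claims~2 and~3 then show $\widehat A_1,\,\lim_N A_3=o(1)$ under vanishing doubling alone. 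Since this is an equality, both directions of the Carleson--entropy comparison follow at once; there is no separate ``hard'' reverse inequality.

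By contrast, your proposed route for the reverse direction---comparing $\eta_Q$ to $\eta(z)$ for nearby off-grid $z$---is genuinely problematic: in the vanishing regime $\eta\asymp(\omega(I_z^+)/\omega(I_z^-)-1)^2$ is quadratically small, and vanishing doubling only tells you both values are $o(1)$, not that their ratio is controlled. An additive $o(1)$ comparison is useless after summing over infinitely many generations. Your other sketch (``telescope the $\eta$-integral in the scale variable over $T(I)$, up to translation errors'') \emph{is} the paper's route; what you call ``harmless translation errors'' are precisely $\widehat A_1$ and $A_3$, and the substantive work is showing they are $o(1)$ under vanishing doubling---which is exactly where the paper spends its effort (Lemma~\ref{vanish} and Claims~2--3). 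If you pursue that second sketch carefully you will recover the paper's proof and see that the reverse direction comes for free from the identity.
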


\begin{remark}\label{examin}
The condition that $\w$ is an $A_{\infty}$-weight with $\log\w \in \vmo(\R)$ implies that the doubling weight $\omega$ is vanishing on $\R$, which we may obtain by examining the proof of corresponding claim for the case of unit circle in \cite[Lemma 3.3]{Sh18}.
\end{remark}

\begin{remark}
Let $\tilde\eta (z) = \left|1 - \omega(I_z^{+})/\omega(I_z^{-})\right|^2.$ Since the weight $\w$ is doubling, it is easy to compute that $\tilde\eta (z)$ and $\eta (z)$ are comparable with comparison constant depending only on doubling constant for $\w$. Then, the Carleson measure $\eta(z)y^{-1} dxdy$ is vanishing on $\mathbb R_{+}^{2}$ if and only if the Carleson measure $\tilde\eta(z)y^{-1} dxdy$ is. We consider $\lambda_{\delta}(\w) = \sup_{0 < y \leq \delta}\tilde \eta(z)$. Then the doubling weight $\w$ is vanishing on $\R$ if and only if $\lim_{\delta \to 0^+} \lambda_{\delta}(\w) = 0$. Further, if the rate of convergence of $\lambda_{\delta}(\w)$ satisfies the condition
\begin{equation*}\label{lambda}
\int_0 \frac{\lambda_{\delta}(\w)}{\delta} d\delta < \infty,
\end{equation*}
 then, by the estimate 
\begin{align*}
 \int_{I(x_0, t)} \int_0^t\tilde\eta (z) \frac{dxdy}{y} \leq \int_{I(x_0, t)} \int_0^t\lambda_{y}(\w) \frac{dxdy}{y} = t\int_0^t \frac{\lambda_y(\w)}{y}dy, 
\end{align*}
we can conclude that the measure $\tilde\eta(z)y^{-1} dxdy$ is a vanishing Carleson measure on $\mathbb R_{+}^{2}$.
\end{remark}

The paper is structured as follows: in Section 2, we give some basic definitions and results on BMO functions and Muckenhoupt weights which will be used in the proof of Theorem \ref{main}. Section 3 is devoted to the proof of Theorem \ref{main}. 

\section{Preliminaries}
Let $I_{0}$ be any interval on the real line $\R$. A locally integrable function $u \in L^{1}_{loc}\left(I_{0}\right)$ is said to have \textit{bounded mean oscillation} (abbreviated to BMO) if
\[
\|u\|_{\bmo\left(I_{0}\right)} = \sup \frac{1}{|I|} \int_{I}\left|u(x)-u_{I}\right| d x <\infty,
\]
where the supremum is taken over all bounded intervals $I$ of $I_{0}$, $u_{I}$ is the average of $u$ on the interval $I$. The set of all BMO functions on $I_0$ is denoted by ${\rm BMO}(I_0)$. This is regarded as a Banach space with norm $\Vert \cdot \Vert_{\bmo\left(I_{0}\right)}$ modulo constants since obviously constant functions have norm zero. Moreover, if $u$ also satisfies the condition
\[
\lim _{|I| \rightarrow 0} \frac{1}{|I|} \int_{I}\left|u(x)-u_{I}\right|d x= 0,
\]
we say $u$ has \textit{vanishing mean oscillation} (abbreviated to VMO). The set of all VMO functions on $I_0$ is denoted by ${\rm VMO}(I_0)$.
This is a closed subspace of ${\rm BMO}(I_0)$. 
The {\it John--Nirenberg inequality} for BMO functions (see \cite[VI.2]{Ga}, \cite[IV.1.3]{St}) asserts that
there exists two universal positive constants $C_1$ and $C_2$ such that for any $u \in {\rm BMO}(I_0)$,
any bounded interval $I$ of $I_0$, and any $\lambda > 0$, it holds that
\begin{equation}\label{JN}
\frac{1}{|I|} |\{t \in I: |u(t) - u_I| \geq \lambda \}| \leq C_1 \exp\left(\frac{-C_{2}\lambda}{\Vert u \Vert_{\bmo(I_0)}} \right).
\end{equation}

We say that the weight $\omega$ is
a {\it Muckenhoupt $A_p$-weight} (abbreviated to $A_p$-weight) for $p>1$ if there exists a constant $C_p(\omega) \geq 1$ such that
\begin{equation}\label{Ap}
\left(\frac{1}{|I|} \int_I \omega(x)dx \right)\left(\frac{1}{|I|} \int_{I} \left(\frac{1}{\omega(x)}\right)^{\frac{1}{p-1}}dx\right)^{p-1}
\leq C_p(\omega)
\end{equation}
for any bounded interval $I \subset \mathbb R$. We call the optimal value of such $C_p(\omega)$
the $A_p$-constant for $\omega$. It is known that $\bigcup_{p > 1} A_p = A_\infty$ and $A_p \subset A_q$ for $p < q$ (see \cite{M}).

The Jensen inequality implies that
\begin{equation}\label{Jensen}
\exp \left(\frac{1}{|I|} \int_I \log \omega(x) dx \right)
\leq \frac{1}{|I|} \int_I \omega(x) dx.
\end{equation}
Another characterization of $A_\infty$-weights can be given by the reverse Jensen inequality. Namely, $\omega \geq 0$ belongs to the class of $A_\infty$-weights
if and only if there exists a constant $C_\infty(\omega) \geq 1$ such that
\begin{equation}\label{iff}
\frac{1}{|I|} \int_I \omega(x) dx \leq C_\infty(\omega) \exp \left(\frac{1}{|I|} \int_I \log \omega(x) dx \right)
\end{equation}
for every bounded interval $I \subset \mathbb R$ (see \cite{Hr}).
We call the optimal value of such $C_\infty(\omega)$
the $A_\infty$-constant for $\omega$.

Sarason gave a characterization of $\vmo$ functions by means of $A_2$-weights (see \cite[Theorem 2]{Sa}).
\begin{proposition}\label{sarason}
Let $\w$ be a weight function with $\log \w \in \bmo(\R)$. Then, $\log \w$ belongs to $\vmo(\R)$ if and only if
\begin{align}\label{sara}
\lim_{|I| \to 0}\left(\frac{1}{|I|}\int_{I}\w(x) d x\right)\left(\frac{1}{|I|}\int_{I}\frac{1}{\w(x)}d x\right)=1.
\end{align}
\end{proposition}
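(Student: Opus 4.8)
The plan is to reduce everything to the elementary identity
$$\left(\frac{1}{|I|}\int_I\w(x)\,dx\right)\left(\frac{1}{|I|}\int_I\frac{1}{\w(x)}\,dx\right)=\frac{1}{|I|^2}\int_I\int_I\cosh\big(u(x)-u(y)\big)\,dx\,dy=:Q(I),$$
where $u=\log\w$: one writes the product as the double integral of $e^{u(x)-u(y)}$ over $I\times I$ and symmetrizes in $x$ and $y$. Since $\cosh\ge 1$ we always have $Q(I)\ge 1$, so the assertion is that $\sup_{|I|<\d}\big(Q(I)-1\big)\to0$ as $\d\to0$ if and only if the mean oscillation of $u$ over intervals of length $<\d$ tends to $0$.

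For the "if" direction I would use the lower bound $\cosh t-1\ge t^2/2$ together with the variance identity $\frac{1}{|I|^2}\int_I\int_I|u(x)-u(y)|^2\,dx\,dy=\frac{2}{|I|}\int_I|u(x)-u_I|^2\,dx$ to get
$$\frac{1}{|I|}\int_I|u(x)-u_I|^2\,dx\le Q(I)-1 .$$
If \eqref{sara} holds, the right-hand side tends to $0$ uniformly over $|I|<\d$ as $\d\to0$, so $\frac{1}{|I|}\int_I|u-u_I|^2\,dx\to0$, and Cauchy--Schwarz gives $\frac{1}{|I|}\int_I|u-u_I|\,dx\to0$, i.e. $u\in\vmo(\R)$. (The identity $Q(I)=\frac{1}{|I|^2}\int\int\cosh(u(x)-u(y))$ is valid for the small intervals in play because finiteness of \eqref{sara} forces $\w$ and $1/\w$ to be integrable there.)

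For the "only if" direction, assume $u\in\vmo(\R)$ and set $\mu(\d)=\sup_{|J|<\d}\frac{1}{|J|}\int_J|u-u_J|\,dx$, so $\mu(\d)\to0$. Fix $I$ with $|I|<\d$ and write $v=u-u_I$; then $\|u\|_{\bmo(I)}\le\mu(\d)$, so applying the John--Nirenberg inequality \eqref{JN} on the interval $I$ itself (taking $I_0=I$) yields $\frac{1}{|I|}|\{t\in I:|v(t)|\ge\l\}|\le C_1 e^{-C_2\l/\mu(\d)}$. Using the upper bound $\cosh t-1\le\frac{t^2}{2}e^{|t|}$ (a term-by-term power-series comparison) together with $|u(x)-u(y)|^2\le 2v(x)^2+2v(y)^2$ and $e^{|u(x)-u(y)|}\le e^{|v(x)|}e^{|v(y)|}$, one obtains
$$Q(I)-1\le 2\left(\frac{1}{|I|}\int_I v(x)^2 e^{|v(x)|}\,dx\right)\left(\frac{1}{|I|}\int_I e^{|v(y)|}\,dy\right).$$
A layer-cake computation with the distributional bound above shows that, once $\d$ is small enough that $C_2/\mu(\d)>2$, the integrals $\int_0^\infty e^{\l(1-C_2/\mu(\d))}\,d\l$ and $\int_0^\infty(2\l+\l^2)e^{\l(1-C_2/\mu(\d))}\,d\l$ are $O(\mu(\d))$ and $O(\mu(\d)^2)$ respectively, so the first factor is $O(\mu(\d)^2)$ and the second is $1+O(\mu(\d))$; hence $\sup_{|I|<\d}\big(Q(I)-1\big)\to0$, which with $Q(I)\ge1$ gives \eqref{sara}.

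The main obstacle is the "only if" direction, and the decisive point there is to apply John--Nirenberg \emph{on the small interval itself}, so that the exponential integrability of $v=u-u_I$ is controlled by the \emph{local} oscillation $\mu(\d)$ rather than by the possibly large global norm $\|u\|_{\bmo(\R)}$; once this is arranged, the remaining estimates of the two layer-cake integrals are routine. The "if" direction, by contrast, is essentially immediate from the $\cosh$-identity, the inequality $\cosh t-1\ge t^2/2$, and Cauchy--Schwarz.
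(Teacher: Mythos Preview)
Your proof is correct. Note that the paper does not give its own proof of this proposition --- it cites Sarason \cite{Sa} --- but its proof of the closely related Proposition~\ref{limit} implicitly covers both directions of Proposition~\ref{sarason}, so a comparison is still meaningful. For the ``only if'' direction the decisive idea is identical: apply John--Nirenberg on the small interval $I$ itself, so that the exponential integrability of $v=u-u_I$ is controlled by the \emph{local} oscillation $\mu(\delta)$ rather than the global $\bmo$-norm. From there the paper takes a slightly shorter path, bounding $\frac{1}{|I|}\int_I e^{|v|}\le 1+\varepsilon$ directly and then using $ab\le\tfrac14(a+b)^2$ to obtain $Q(I)\le\tfrac14(\varepsilon+2)^2$, whereas you go through the $\cosh$-identity, the pointwise bound $\cosh t-1\le\tfrac{t^2}{2}e^{|t|}$, and a second layer-cake integral $\frac{1}{|I|}\int_I v^2 e^{|v|}$. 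For the ``if'' direction your route is genuinely different and cleaner: the paper (via Proposition~\ref{limit} and Jensen) runs a Sarason-style measure-theoretic argument, analysing the set where $|\log\omega|>\varepsilon$ and extracting the oscillation bound by hand; your $\cosh$-identity together with $\cosh t-1\ge t^2/2$ and the variance formula gives the one-line estimate $\frac{1}{|I|}\int_I|u-u_I|^2\le Q(I)-1$, after which Cauchy--Schwarz finishes immediately and makes the quantitative dependence transparent.
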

Here, \eqref{sara} may be thought of as a limit $A_2$-condition.
Inspired by Proposition \ref{sarason}, Mitsis \cite{Mi} pushed the analogy between $A_p$-weights and $A_{\infty}$-weights further by replacing the limit $A_2$-condition with the following so-called asymptotic reverse Jensen inequality (see \eqref{equiv} below):
\begin{proposition}\label{limit}
Let $\omega$ be a weight function with $\log \omega \in \bmo(\R)$. Then, $\log \omega$ belongs to $\vmo(\R)$ if and only if
\begin{align}\label{equiv}
\lim_{|I| \to 0}\left(\frac{1}{|I|}\int_{I}\omega(x)d x\right)\exp{\left(-\frac{1}{|I|}\int_{I}\log \omega(x) d x\right)}=1.
\end{align}
In other words,
\begin{align}\label{equiv1}
\lim_{|I| \to 0}\left(\log \omega_I - (\log\omega)_I \right) = 0.
\end{align}
\end{proposition}
More precisely, Mitsis \cite{Mi} proved sufficiency with respect to nonatomic measures (covering Lebesgue measure) by performing a dyadic decomposition of the involved interval and omitted the detailed proof of necessity by pointing out it is a standard argument involving the John-Nirenberg inequality for nonatomic measures. Proposition \ref{limit} is relevant to the proof of Theorem \ref{main} in the following section. For the completeness of our paper, we complement the proof of necessity and give sufficiency a simple proof involving only elementary measure theoretic considerations.

\begin{remark}
Put it differently, Propositions \ref{sarason} and \ref{limit} imply that $A_2$-condition and $A_{\infty}$-condition coincide if one restricts to weights which tend to be constant on arbitrarily small intervals.
\end{remark}

\begin{proof}[Proof of Proposition \ref{limit}]
Suppose $\log\omega \in \vmo(\R)$. 
By the Jensen inequality, we see
$$
1\leq \left(\frac{1}{|I|}\int_{I}\omega(x)d x\right)\exp{\left(-\frac{1}{|I|}\int_{I}\log \omega(x) d x\right)}
\leq \left(\frac{1}{|I|}\int_{I}\omega(x)dx\right)\left(\frac{1}{|I|}\int_{I}\frac{1}{\omega(x)}dx\right).
$$
Then, obviously, \eqref{equiv} follows from \eqref{sara}.

Suppose \eqref{equiv} holds. To show $u := \log\omega \in \vmo(\R)$, we use a strategy of measure theory in \cite[Lemma 3]{Sa}.
Let $I$ be a bounded interval in $\R$ such that
\begin{align}\label{equation}
\left(\frac{1}{|I|}\int_{I}\omega(x)d x\right)\exp{\left(-\frac{1}{|I|}\int_{I}\log \omega(x) d x\right)} = 1 + \varepsilon^{3}
\end{align}
for $0 < \varepsilon < 1/2$. Assuming
\begin{align}\label{assume}
u_I = \frac{1}{|I|}\int_{I}\log \w(x) d x=0, 
\end{align}
we have
$$\frac{1}{|I|}\int_{I}\w(x)d x =1 + \varepsilon^3.$$
Let $F$ be the set where $e^{-\varepsilon}<\w<e^{\varepsilon}$ and $E=I-F$. We have
\begin{align*}
	(1 + \varepsilon^3)|I|&=\int_{E}(\w(x)-\log\w(x))dx+\int_{F}(\w(x)-\log\w(x))dx\\
	&\geq (e^{-\varepsilon}+\varepsilon)|E|+|F|\\
	&\geq (1+\frac{1}{4}\varepsilon^{2})|E|+|F|\\
	&=|I|+\frac{1}{4}\varepsilon^{2}|E|,
\end{align*}
which implies $|E|\leq 4\varepsilon|I|$ and $|F|\geq (1-4\varepsilon)|I|$. Thus,
\begin{align*}
	\int_{E}\w(x)d x &=(1 + \varepsilon^3)|I|-\int_{F}\w(x)d x\\
	&\leq (1+\varepsilon)|I|-e^{-\varepsilon}|F|\\
	&\leq (1+\varepsilon)|I|-(1-\varepsilon)(1-4\varepsilon)|I|\\
	&<6\varepsilon|I|.
\end{align*}
On the other hand, by \eqref{assume} we have
$$-\int_{E}\log \w(x) d x=\int_{F}\log \w(x) d x \leq \varepsilon|F|\leq \varepsilon|I|.$$

Noting that $|\log \w| < \varepsilon$ on $F$, and $|\log \w | \leq \w -\log\w$ generally, we conclude that
\begin{align*}
	\int_{I}|\log\w(x)| d x&=\int_{E}|\log\w(x)| d x+\int_{F}|\log\w(x)| d x\\
	&\leq \int_{E}(\w(x)-\log\w(x))dx+\varepsilon|F|\\
	&\leq 6\varepsilon |I| + \varepsilon |I| + \varepsilon |I| = 8\varepsilon |I|.
\end{align*}
Combined with \eqref{assume}, this implies that
\begin{align}\label{55}
\frac{1}{|I|}\int_{I}\left|u(x)- u_I\right|d x < 8 \varepsilon
\end{align}
for $u = \log\w$. If $\log\w$ does not satisfy \eqref{assume}, then we write $\log\w = (\log\w - a) + a$ with $a = \frac{1}{|I|}\int_{I}\log\w(x) dx$. Since $\log\w - a$ satisfies \eqref{equation}, and \eqref{assume} holds for $\log\w - a$, we conclude from \eqref{55} that
\begin{align*}
\frac{1}{|I|}\int_{I}\left|u(x)- u_I\right|d x = \frac{1}{|I|}\int_{I}\left|(u(x) - a)- (u - a)_I\right|d x < 8 \varepsilon.
\end{align*}
Thus, \eqref{55} holds for every $u = \log\w$ which satisfies \eqref{equation}. Consequently, \eqref{equiv} implies $u = \log\w \in \vmo(\R)$. This completes the proof of Proposition \ref{limit}.
\end{proof}

\section{Proof of Theorem 1}
In this section, we focus on the proof of Theorem \ref{main}.

For any $x_0 \in \mathbb R$ and $t > 0$, we set $I(x_0, t) = \{x\mid \, |x - x_0| < t/2\}$ and set
$$
A(x_0, t) = \frac{1}{t} \int_0^t\int_{I(x_0, t)} \eta (z) \frac{dxdy}{y}=\frac{1}{t}\int_{I(x_0, t)} \int_0^t \eta (z) \frac{dydx}{y}.
$$
It is remarkable that $\eta(z)/y$ is locally integrable (see \cite{GN}), the above equality holds due to Fubini's theorem.
 
Considering that the integrand $\eta (z)/y$ is nonnegative, we divide the integral by $dy$ over $[0, t]$ into those on dyadic intervals and then by changing the variables, we obtain
\begin{align*}
\int_0^t \int_{I(x_0, t)} \eta (z) \frac{dy}{y} &= \sum_{k=0}^{\infty} \int_{\frac{t}{2^{k+1}}}^{\frac{t}{2^k}}\int_{I(x_0, t)} \log\frac{2^{-1} \omega(I_z)}{\omega(I_z^+)^{\frac{1}{2}}\omega(I_z^-)^{\frac{1}{2}}}\frac{dy}{y}\\
&= \lim_{N\to\infty}\sum_{k=1}^{N}\int_{\frac{t}{2}}^{t}\int_{I(x_0, t)} \log\frac{2^{-1}\omega(x - \frac{y}{2^k}, x+\frac{y}{2^k})}{ \omega(x - \frac{y}{2^k}, x)^{\frac{1}{2}} \omega(x, x+\frac{y}{2^k})^{\frac{1}{2}}} \frac{dy}{y}. 
\end{align*}
By rearranging the order of the following sum:
\begin{align}\label{ps}
&\quad \sum_{k=1}^{N}\log\frac{2^{-1}\omega(x - \frac{y}{2^k}, x+\frac{y}{2^k})}{ \omega(x - \frac{y}{2^k}, x)^{\frac{1}{2}} \omega(x, x+\frac{y}{2^k})^{\frac{1}{2}}} \\\notag
&= \log\frac{2^{-N}\omega(x - \frac{y}{2}, x+\frac{y}{2})}{ \omega(x - \frac{y}{2^N}, x)^{\frac{1}{2}} \omega(x, x+\frac{y}{2^N})^{\frac{1}{2}}} + \sum_{k=1}^{N-1}\log\frac{\omega(x - \frac{y}{2^{k+1}}, x+\frac{y}{2^{k+1}})}{ \omega(x - \frac{y}{2^k}, x)^{\frac{1}{2}} \omega(x, x+\frac{y}{2^k})^{\frac{1}{2}}}
\end{align}	
and by using further observation on the ratio of the first term:
\begin{align}\label{partialsum}
	\frac{2^{-N}\omega(x - \frac{y}{2}, x+\frac{y}{2})}{ \omega(x - \frac{y}{2^N}, x)^{\frac{1}{2}} \omega(x, x+\frac{y}{2^N})^{\frac{1}{2}}} = \frac{\omega(x - \frac{y}{2}, x+\frac{y}{2})/y}{ (\omega(x - \frac{y}{2^N}, x)/\frac{y}{2^N})^{\frac{1}{2}} (\omega(x, x+\frac{y}{2^N})/\frac{y}{2^N})^{\frac{1}{2}}},
\end{align}
from \eqref{ps} and \eqref{partialsum} we see that $N$-th partial sum of the series $A(x_0, t)$ can be written as
\begin{align*}
&\sum_{k=1}^{N} \frac{1}{t}\int_{\frac{t}{2}}^{t}\int_{I(x_0, t)} \log\frac{2^{-1}\omega(x - \frac{y}{2^k}, x+\frac{y}{2^k})}{ \omega(x - \frac{y}{2^k}, x)^{\frac{1}{2}} \omega(x, x+\frac{y}{2^k})^{\frac{1}{2}}} \frac{dxdy}{y}\\
& = \frac{1}{t}\int_{\frac{t}{2}}^{t}\int_{I(x_0, t)}\log	\frac{\omega(x - \frac{y}{2}, x+\frac{y}{2})}{y} \frac{dxdy}{y} \\
&\quad - \frac{1}{2t} \left( \int_{\frac{t}{2}}^{t}\int_{I(x_0, t)}\log \frac{\omega(x - \frac{y}{2^N}, x)}{\frac{y}{2^N}} \frac{dxdy}{y} + \int_{\frac{t}{2}}^{t}\int_{I(x_0, t)}\log \frac{\omega(x, x+\frac{y}{2^N})}{\frac{y}{2^N}} \frac{dxdy}{y}\right)\\
&\quad + \sum_{k=1}^{N-1}\frac{1}{t}\int_{\frac{t}{2}}^{t}\int_{I(x_0, t)}\log\frac{\omega(x - \frac{y}{2^{k+1}}, x+\frac{y}{2^{k+1}})}{ \omega(x - \frac{y}{2^k}, x)^{\frac{1}{2}} \omega(x, x+\frac{y}{2^k})^{\frac{1}{2}}} \frac{dxdy}{y}\\
 &:= A_1(x_0,t)- A_2(x_0,t) + A_3(x_0,t).
\end{align*}
Then, we can separate $A(x_0, t)$ into four parts as follows: 
 \begin{align}\label{3terms}
A(x_0, t) =& \lim_{N\to \infty}[A_1(x_0,t)- A_2(x_0,t) + A_3(x_0,t)]\notag\\
 =& (A_1(x_0,t) - \log 2\log\omega_{I(x_0, t)}) -\lim_{N\to\infty} [A_2(x_0,t) - \log 2 (\log\omega)_{I(x_0, t)}] \\\notag
&+ \lim_{N\to\infty} A_3(x_0,t)+ \log 2 (\log \omega_{I(x_0, t)} - (\log\omega)_{I(x_0, t)})\\\notag
:=& \widehat{A_1}(x_0,t)- \lim_{N\to\infty}\widehat{A_2}(x_0,t) +\widehat{A_3}(x_0,t)+A_4.
\end{align}
Here, $\w_I$ is the average of $\w$ on the interval $I$ as above; namely, $\w_I = \frac{1}{|I|}\int_I\w(x)dx$.

\begin{claim}\label{c1}
If $\omega$ is an $A_\infty$-weight on $\mathbb{R}$, it holds that
$$\lim_{N\to \infty} \widehat{A_2}(x_0,t) = 0.$$
\end{claim}
\begin{proof}
We shall apply Lebesgue's dominated theorem to prove the claim. For any fixed $x_0,\ t$, set $h(x)=\log \omega(x)\chi_{I(x_0,2t)}$. Then using our hypothesis on $\omega$ that $\omega$ is an $A_\infty$-weight, it readily follows that
\begin{align*}
	&\left|\log\frac{\omega(x-\frac{y}{2^N},x)}{\frac{y}{2^{N}}}\right|\\
\leq&\left|\log\frac{\omega(x-\frac{y}{2^N},x)}{\frac{y}{2^{N}}}-\frac{1}{\frac{y}{2^{N}}}\int_{x-\frac{y}{2^{N}}}^{x}\log\omega(u)du\right|+\frac{1}{\frac{y}{2^{N}}}\int_{x-\frac{y}{2^{N}}}^{x}\left|\log\omega(u)\right|du\\
\leq&C+Mh(x),
\end{align*}
where $M$ is Hardy-Littlewood maximal operator. It is not hard to see that $\log\omega\in \operatorname{BMO}(\mathbb{R})$ and hence $h(x)\in L^{1}(\mathbb{R})$. Therefore, $Mh$ is in weak-$L^1$ and thus locally integrable. From Lebesgue's dominated theorem and Lebesgue differentiation Theorem, we deduce that
$$\lim_{N\to\infty}\frac{1}{t}\int_{\frac{t}{2}}^{t}\int_{I(x_0,t)}\log\frac{\omega(x-\frac{y}{2^N},x)}{\frac{y}{2^{N}}}\frac{dxdy}{y}=\log2 (\log\omega)_{I(x_0,t)}.$$
This concludes the proof of Claim \ref{c1}.
\end{proof}
	
We give a simple observation on vanishing doubling weights, which will be used to prove Claims \ref{c2} and \ref{c3} in the following. 
\begin{lemma}\label{vanish}
Let the doubling weight $\omega$ be vanishing; namely, for any $\varepsilon > 0$ there exists some $\delta > 0$ such that
\begin{equation}\label{inequ}
(1 + \varepsilon)^{-1} \leq \frac{\omega(x_0, x_0 + t)}{\omega(x_0 - t, x_0)} \leq 1 + \varepsilon
\end{equation}
for every $x_0 \in \mathbb R$ and for every $t \in (0, \delta)$. Then,\\\

\noindent$\begin{aligned}
&(a)\;\frac{(1 + \varepsilon)^n - 1}{\varepsilon(1 + \varepsilon)^{n-1}} \leq \frac{\omega(x_0 - nt, x_0 + nt)}{\omega(x_0 - t, x_0 + t)} \leq \frac{(1 + \varepsilon)^n - 1}{\varepsilon},\\
&(b)\;(1 + \varepsilon)^{-1} \leq \frac{\omega(x_1, x_1 + t)}{\omega(x_0 - t, x_0)} \leq 1 + \varepsilon
\end{aligned}$\\
are satisfied for every positive integer $n$ and for any $x_1 \in (x_0 -t, x_0)$, respectively.
\end{lemma}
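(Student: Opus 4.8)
The plan is to prove both estimates by a telescoping/chaining argument built on the basic doubling hypothesis \eqref{inequ}.

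For part (b), the idea is to reduce the comparison of $\omega(x_1, x_1+t)$ with $\omega(x_0-t,x_0)$ to a comparison between two intervals of length $t$ that share an endpoint, so that \eqref{inequ} applies directly. Write $x_1 = x_0 - s$ with $s \in (0,t)$. Then $(x_1, x_1+t) = (x_0-s, x_0+t-s)$. I would compare this interval to $(x_0-t, x_0)$ via the intermediate interval $(x_0-t, x_0) \to$ (slide right by $t-s$) $\to (x_1, x_1+t)$. To make each slide land on adjacent intervals of equal length, note that sliding an interval of length $t$ by any amount $r \le t$ can be done in one step using the adjacent-interval hypothesis only when $r = t$; for $r < t$ one writes $(x_0-t+r, x_0+r)$ as the union... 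Actually the clean way: for any $a$ and any $r\in[0,t]$, the interval $(a+r, a+r+t)$ and $(a,a+t)$ overlap, and one may interpolate by first noting $(a, a+t)$ and $(a+t, a+2t)$ are adjacent of equal length, hence $\omega(a,a+t)\le (1+\e)\,\omega(a+t,a+2t)$ and symmetrically; then since $(a+r,a+r+t)\subset (a,a+2t)$ and $(a,a+2t)=(a,a+t)\cup(a+t,a+2t)$, monotonicity of $\omega(\cdot)$ as a measure gives $\omega(a+r,a+r+t)\le \omega(a,a+2t)\le (2+\e)\,\omega(a,a+t)$, which is too lossy. So instead I would prove (b) by a more careful two-sided squeeze: split $(x_1,x_1+t)$ at $x_0$ into $(x_1,x_0)$ of length $s$ and $(x_0,x_1+t)$ of length $t-s$, split $(x_0-t,x_0)$ correspondingly at $x_0-t+s$ (or at the point making the pieces comparable), and apply \eqref{inequ} to the two sub-pairs of equal length, which are adjacent. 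The bound $(1+\e)^{-1}\le \omega(x_1,x_1+t)/\omega(x_0-t,x_0)\le 1+\e$ then follows because a weighted average of quantities each lying in $[(1+\e)^{-1},1+\e]$ stays in that interval. This is the step I expect to require the most care, since one must arrange the decomposition so that every invoked pair of intervals is genuinely adjacent and of equal length.

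For part (a), the strategy is a telescoping product. Write
\begin{equation*}
\frac{\omega(x_0-nt, x_0+nt)}{\omega(x_0-t,x_0+t)} = \prod_{k=1}^{n-1}\frac{\omega(x_0-(k+1)t, x_0+(k+1)t)}{\omega(x_0-kt, x_0+kt)},
\end{equation*}
and estimate each factor. The interval $(x_0-(k+1)t, x_0+(k+1)t)$ is the union of $(x_0-kt, x_0+kt)$ together with the two flanking intervals $(x_0-(k+1)t, x_0-kt)$ and $(x_0+kt, x_0+(k+1)t)$, each of length $t$ and each adjacent to a length-$t$ sub-piece of the central interval. Using (b) (applied at the endpoints $x_0\pm kt$) one controls the $\omega$-mass of each flanking piece by that of an adjacent length-$t$ piece inside $(x_0-kt,x_0+kt)$, hence by a fixed fraction of $\omega(x_0-kt,x_0+kt)$; iterating the adjacency estimate $k$ times across the central interval shows the two outermost length-$t$ pieces of $(x_0-kt,x_0+kt)$ have mass at least $(1+\e)^{-(k-1)}$ times, and at most $(1+\e)^{k-1}$ times, the mass of an innermost length-$t$ piece, and summing the geometric-type series over the $2k$ length-$t$ sub-intervals produces exactly the constants $\frac{(1+\e)^{n}-1}{\varepsilon}$ (upper) and $\frac{(1+\e)^{n}-1}{\varepsilon(1+\e)^{n-1}}$ (lower). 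Concretely, it is cleanest to prove (a) directly rather than through the product: decompose $(x_0-nt,x_0+nt)$ into its $2n$ consecutive length-$t$ sub-intervals $J_1,\dots,J_{2n}$ (ordered left to right), observe that $(x_0-t,x_0+t)=J_n\cup J_{n+1}$, and use repeated application of \eqref{inequ} to neighbouring $J_i$'s to get $(1+\e)^{-|i-m|}\le \omega(J_i)/\omega(J_m)\le (1+\e)^{|i-m|}$; then
\begin{equation*}
\frac{\omega(x_0-nt,x_0+nt)}{\omega(x_0-t,x_0+t)} = \frac{\sum_{i=1}^{2n}\omega(J_i)}{\omega(J_n)+\omega(J_{n+1})},
\end{equation*}
and bounding numerator and denominator by the appropriate geometric sums centred at $J_n, J_{n+1}$ gives both inequalities in (a) after summing $\sum_{j=0}^{n-1}(1+\e)^j = \frac{(1+\e)^n-1}{\varepsilon}$.

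The main obstacle will be bookkeeping in part (b): since $x_1$ ranges over an \emph{open} interval and the two intervals being compared share no endpoint in general, one cannot invoke \eqref{inequ} in a single step and must instead split at a common point and estimate each half separately, being careful that each pair of half-intervals to which \eqref{inequ} is applied consists of adjacent intervals of exactly equal length — if the split point is chosen naively the two halves have lengths $s$ and $t-s$ rather than a common length, so the cleanest route is to compare $\omega(x_1,x_1+t)$ and $\omega(x_0-t,x_0)$ each against the fixed reference $\omega(x_0-t/2,x_0+t/2)$ through chains of equal-length adjacent intervals and combine. Once (b) is in hand, part (a) is a routine, if slightly lengthy, geometric-series computation.
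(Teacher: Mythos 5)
Your part (a) is correct and is essentially the paper's argument: decompose $(x_0-nt,x_0+nt)$ into the $2n$ consecutive length-$t$ blocks, chain \eqref{inequ} outward from the central pair $J_n, J_{n+1}$, and sum the geometric series $\sum_{j=0}^{n-1}(1+\varepsilon)^{\pm j}$ to get exactly the stated constants. The problem is part (b), where you correctly isolate both the difficulty (the two intervals share no endpoint) and the right mechanism (a mediant: a ``weighted average'' of ratios each lying in $[(1+\varepsilon)^{-1},1+\varepsilon]$ stays there), but you never produce a decomposition that actually satisfies the hypotheses you need. Your explicit proposal --- split $(x_1,x_1+t)$ at $x_0$ and $(x_0-t,x_0)$ at $x_0-t+s$ --- pairs $(x_1,x_0)=(x_0-s,x_0)$ with $(x_0-t,x_0-t+s)$, and these two length-$s$ intervals are adjacent only when $t=2s$; for $s<t/2$ there is a gap of length $t-2s$ between them, so \eqref{inequ} does not apply. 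Your fallback --- comparing both intervals to a fixed reference such as $\omega(x_0-t/2,x_0+t/2)$ through chains of adjacent equal-length intervals --- composes at least two applications of \eqref{inequ} and therefore yields only $(1+\varepsilon)^2$, which does not prove the lemma as stated (the sharp constant $1+\varepsilon$ is what the statement claims, even if downstream applications could absorb a square).

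The missing ingredient is the choice of the \emph{common} split point $m=(x_0+x_1)/2$, the midpoint of the overlap $(x_1,x_0)$ of the two intervals. Splitting both intervals at $m$ gives the pairs $(m,x_1+t)$ versus $(x_0-t,m)$, which are adjacent at $m$ and both have length $t+(x_1-x_0)/2\in(t/2,t)\subset(0,\delta)$, and $(x_1,m)$ versus $(m,x_0)$, which are adjacent at $m$ and both have length $(x_0-x_1)/2<\delta$. Each ratio lies in $[(1+\varepsilon)^{-1},1+\varepsilon]$ by \eqref{inequ}, and the mediant
\[
\frac{\omega(m,x_1+t)+\omega(x_1,m)}{\omega(x_0-t,m)+\omega(m,x_0)}=\frac{\omega(x_1,x_1+t)}{\omega(x_0-t,x_0)}
\]
then lies in the same range, which is exactly (b). Without identifying this split point, your argument for (b) does not close.
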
	

\begin{proof}	
We now estimate $\omega(x_0 - nt, x_0 + nt)$	from both above and below in terms of $\omega(x_0 - t, x_0 + t)$.
\begin{align*}
&\quad\omega(x_0 - nt, x_0 + nt)\\
& = \sum\nolimits_{k=1}^n\left(\omega(x_0-kt,x_0-(k-1)t)+\omega(x_0+(k-1)t,x_0+kt)\right)\\
& \leq \sum\nolimits_{k=1}^n(1+\varepsilon)^{k-1}\left(\omega(x_0-t,x_0)+\omega(x_0,x_0+t)\right)\\
& = \frac{(1 + \varepsilon)^n - 1}{\varepsilon}\omega(x_0 - t, x_0 + t),
\end{align*}
and similarly,
\begin{align*}
&\quad\omega(x_0 - nt, x_0 + nt)\\
& \geq \sum\nolimits_{k=1}^{n}((1+\varepsilon)^{-1})^{k-1}\omega(x_0 - t, x_0 + t)\\
& = \frac{(1 + \varepsilon)^n - 1}{\varepsilon(1 + \varepsilon)^{n-1}}\omega(x_0 - t, x_0 + t),
\end{align*}
from which we obtain the statement $(a)$.

To prove the statement $(b)$ we use the property of the mediant. It says that assuming
$$
(1+\varepsilon)^{-1} \leq \frac{b}{a},\;\frac{d}{c}\leq 1+\varepsilon
$$
we have
$$
(1+\varepsilon)^{-1} \leq \frac{b+d}{a+c} \leq 1+\varepsilon.
$$
Since
$$
(1+\varepsilon)^{-1} \leq \frac{\omega((x_0+x_1)/2, x_1+t)}{\omega(x_0-t, (x_0+x_1)/2)},\;\frac{\omega(x_1, (x_0+x_1)/2)}{\omega((x_0+x_1)/2, x_0)}\leq 1+\varepsilon,
$$
by the property of the mediant, the statement $(b)$ is proved.

\end{proof}

\begin{claim}\label{c2}
If the doubling weight $\omega$ is vanishing on $\mathbb R$, then it holds that
$$\lim_{t \to 0} \widehat{A_1}(x_0,t) = 0$$
uniformly for $x_0 \in \mathbb R$.
\end{claim}	

\begin{proof}
We shall make an estimate on $\widehat{A_1}(x_0,t)$:
\begin{equation}\label{A1}
\begin{split}
|\widehat{A_1}(x_0,t)| & \leq \frac{1}{t} \int_{I(x_0, t)}\int_{\frac{t}{2}}^{t} \left|\log\frac{\omega_{I(x, y)}}{\omega_{I(x_0, t)}}\right| \frac{dydx}{y} \\
& \leq \frac{1}{t} \int_{I(x_0, t)}\int_{\frac{t}{2}}^{t} \left|\log\frac{\omega_{I(x, y)}}{\omega_{I(x, t)}}\right| \frac{dydx}{y} + \frac{1}{t} \int_{I(x_0, t)}\int_{\frac{t}{2}}^{t} \left|\log\frac{\omega_{I(x, t)}}{\omega_{I(x_0, t)}}\right| \frac{dydx}{y}.
\end{split}
\end{equation}
Since $\omega$ is vanishing, for any arbitrarily small $\varepsilon > 0$ there exists some $\delta > 0$ such that \eqref{inequ} holds for every $x_0 \in \mathbb R$ and for every $t \in (0, \delta)$. We suppose $t \in (0, \delta)$ in the following.

First, we estimate the first term in the last line of \eqref{A1}. Set $N = 1/\sqrt\varepsilon$ (we may adjust $\varepsilon$ so that $N$ becomes an integer). Then,
$$
\int_{\frac{t}{2}}^{t} \left|\log\frac{\omega_{I(x, y)}}{\omega_{I(x, t)}}\right| \frac{dy}{y}
 \leq \sum_{k=1}^{N} \int_{\frac{t}{2}(1+\frac{k -1}{N})}^{\frac{t}{2}(1+\frac{k}{N})} |\log\omega_{I(x, y)} - \log\omega_{I(x, t)}| \frac{dy}{y}.
$$
We note that as $y \in [\frac{t}{2}(1+\frac{k-1}{N}), \frac{t}{2}(1+\frac{k}{N})]$, the difference value $|\log\omega_{I(x, y)} - \log\omega_{I(x, t)}|$ is less than the maximum of 
\begin{equation}\label{big}
\max_{1\leq k\leq N}\left|\log\left(\frac{1}{\frac{t}{2}(1+\frac{k-1}{N})} \int_{x-\frac{t}{4}(1+\frac{k}{N})}^{x+\frac{t}{4}(1+\frac{k}{N})}\omega(u)du\right) - \log\omega_{I(x,t)}\right|
\end{equation}
and
\begin{equation}\label{small}
\max_{1\leq k\leq N}\left|\log\left(\frac{1}{\frac{t}{2}(1+\frac{k}{N})} \int_{x-\frac{t}{4}(1+\frac{k-1}{N})}^{x+\frac{t}{4}(1+\frac{k-1}{N})}\omega(u)du\right) - \log\omega_{I(x,t)}\right|.
\end{equation}
We assume that \eqref{big} is bigger than \eqref{small} and continue our computation (the other case can be treated similarly). Combined with 
\begin{equation*}
	\sum_{k =1}^{N} \int_{\frac{t}{2}(1+\frac{k -1}{N})}^{\frac{t}{2}(1+\frac{k}{N})}\frac{dy}{y} = \log 2 < 1,
\end{equation*}
it implies 
\begin{equation*}
\begin{split}
\quad\int_{\frac{t}{2}}^{t}\left| \log\frac{\omega_{I(x, y)}}{\omega_{I(x, t)}}\right| \frac{dy}{y}\leq&\max_{1\leq k\leq N}\left|\log\left(\frac{1}{\frac{t}{2}(1+\frac{k-1}{N})} \int_{x-\frac{t}{4}(1+\frac{k}{N})}^{x+\frac{t}{4}(1+\frac{k}{N})}\omega(u)du\right) - \log\omega_{I(x,t)}\right|\\
\leq &\max_{1\leq k\leq N}\left\{\log\frac{\frac{t}{2}(1+\frac{k}{N})}{\frac{t}{2}(1+\frac{k-1}{N})} +\left|\log \omega_{I(x,\frac{t(N+k)}{2N})} - \log\omega_{I(x,t)}\right|\right\}\\
\leq &\log\left(1+\frac{1}{N}\right)+\max_{1\leq k\leq N}\left|\log \omega_{I(x,\frac{t}{2N}\times(N+k))} - \log\omega_{I(x,\frac{t}{2N}\times2N)}\right|.
\end{split}
\end{equation*}
By the statement $(a)$ in Lemma \ref{vanish}, we conclude 
\begin{equation*}
\begin{split}
&\left|\log \omega_{I(x,\frac{t}{2N}\times(N+k))} - \log\omega_{I(x,\frac{t}{2N}\times2N)}\right|\\
 \leq& \left|\log\left(\frac{(1+\varepsilon)^{N+k} - 1}{(N+k)\varepsilon}\omega_{I(x, \frac{t}{2N})}\right) - \log\left(\frac{(1+\varepsilon)^{2N} - 1}{2N\varepsilon(1+\varepsilon)^{2N-1}}\omega_{I(x, \frac{t}{2N})}\right)\right|\\
&+\left|\log\left(\frac{(1+\varepsilon)^{2N} - 1}{2N\varepsilon}\omega_{I(x, \frac{t}{2N})}\right) - \log\left(\frac{(1+\varepsilon)^{N+k} - 1}{(N+k)\varepsilon(1+\varepsilon)^{N+k-1}}\omega_{I(x, \frac{t}{2N})}\right)\right|\\
\leq&2\left|\log\left(\frac{(1+\varepsilon)^{N+k} - 1}{(1+\varepsilon)^{2N} - 1}\cdot \frac{2N}{N+k}\right)\right|+(3N+k-2)\log(1+\varepsilon).
\end{split}
\end{equation*}
The monotonicity of $\frac{a^x-1}{x}(a>1)$ yields that 
 \begin{equation}
 	\begin{split}
 		\int_{\frac{t}{2}}^{t} \left|\log\frac{\omega_{I(x, y)}}{\omega_{I(x, t)}} \right|\frac{dy}{y}\leq & \log\left(1+\frac{1}{N}\right)+2\left|\log\frac{2((1+\varepsilon)^{N} - 1)}{(1+\varepsilon)^{2N} - 1}\right|+(4N-2)\log(1+\varepsilon).
 	\end{split}
 \end{equation}
This can be arbitrarily small as $\varepsilon$ is sufficiently small. Therefore, the first term in the last line of \eqref{A1} tends to $0$ uniformly for $x_0 \in \mathbb R$ as $t\to 0$.

Next, we consider the second term in the last line of \eqref{A1}. By using the statement $(b)$ in Lemma \ref{vanish}, we see that
\begin{align*}
&\quad\frac{1}{t} \int_{I(x_0, t)}\int_{\frac{t}{2}}^{t} \left|\log\frac{\omega_{I(x, t)}}{\omega_{I(x_0, t)}}\right| \frac{dydx}{y}\\
& \leq \frac{1}{t} \int_{I(x_0, t)} \left|\log\frac{\omega_{I(x, t)}}{\omega_{I(x_0, t)}}\right| dx \times \log 2\\
& = \frac{1}{t} \int_{I(x_0, t)}\left| \log\frac{\omega(x - \frac{t}{2}, x+\frac{t}{2})}{\omega (x_0-\frac{t}{2}, x_0+\frac{t}{2})}\right| dx \times \log 2\\
&\leq \varepsilon\times\log 2.
\end{align*}
Thus, the second term in the last line of \eqref{A1} is bounded by a constant multiple of $\varepsilon$. This completes the proof of Claim \ref{c2}.
\end{proof}

\begin{claim}\label{c3}
If the doubling weight $\omega$ is vanishing on $\mathbb R$, then it holds that
$$
\lim_{t \to 0} \widehat{A_3}(x_0, t) = 0
$$
uniformly for $x_0 \in \mathbb R$.
\end{claim}

\begin{proof}
Set
\begin{equation}\label{Fk}
F_k(y) = \int_{I(x_0, t)}\log\frac{\omega(x - \frac{y}{2^{k+1}}, x+\frac{y}{2^{k+1}})}{ \omega(x - \frac{y}{2^k}, x)^{\frac{1}{2}} \omega(x, x+\frac{y}{2^k})^{\frac{1}{2}}} dx.
\end{equation}	
Then,
\begin{equation}\label{3}
 \widehat{A_3}(x_0, t) = \sum_{k=1}^{\infty}\frac{1}{t}\int_{\frac{t}{2}}^{t}F_k(y) \frac{dy}{y}.
\end{equation}
Assume that $\omega$ is vanishing; namely, for any arbitrarily small $\varepsilon > 0$ there exists some $\delta > 0$ such that \eqref{inequ} holds for every $x_0 \in \mathbb R$ and for every $t \in (0, \delta)$. We suppose $t \in (0, \delta)$ in the following. 

We now estimate the ratio $|F_k(y)/y|$. The expression (\ref{Fk}) for $F_k(y)$ is changed in form step by step in the following for convenience of the estimate. We first notice that
\begin{align*}
2F_k(y) = &2\int_{I(x_0, t)}\log\omega(I(x, \frac{y}{2^k})) dx - \int_{I(x_0, t)}\log\omega(x - \frac{y}{2^k}, x) dx\\
& - \int_{I(x_0, t)}\log\omega(x, x+\frac{y}{2^k}) dx.
\end{align*}
By the change of the variables, we make the integrands in the second and third terms same as the first one; namely,
\begin{align*}
&\quad \int_{I(x_0, t)}\log\omega(x - \frac{y}{2^k}, x) dx + \int_{I(x_0, t)}\log\omega(x, x+\frac{y}{2^k}) dx\\
& = \int_{x_0 - \frac{t}{2} - \frac{y}{2^{k+1}}}^{x_0 + \frac{t}{2} - \frac{y}{2^{k+1}}} \log\omega(I(x, \frac{y}{2^k})) dx + \int_{x_0 - \frac{t}{2} + \frac{y}{2^{k+1}}}^{x_0 + \frac{t}{2} + \frac{y}{2^{k+1}}} \log\omega(I(x, \frac{y}{2^k})) dx.
\end{align*}
Then, by rearranging the interval of integration, $2F_k(y)$ is divided into four terms:
\begin{align*}
2F_k(y) & = \left( \int_{x_0 - \frac{t}{2}}^{x_0 - \frac{t}{2} + \frac{y}{2^{k+1}}} \log\omega(I(x, \frac{y}{2^k})) dx - \int_{x_0 - \frac{t}{2} - \frac{y}{2^{k+1}}}^{x_0 - \frac{t}{2}} \log\omega(I(x, \frac{y}{2^k})) dx\right) \\
&\quad+ \left( \int_{x_0 + \frac{t}{2} - \frac{y}{2^{k+1}}}^{x_0 + \frac{t}{2}} \log\omega(I(x, \frac{y}{2^k})) dx - \int_{x_0 + \frac{t}{2}}^{x_0 + \frac{t}{2} + \frac{y}{2^{k+1}}} \log\omega(I(x, \frac{y}{2^k})) dx\right).
\end{align*}
Finally, by the change of the variables again, we have
\begin{align*}
2F_k(y) = \int_{x_0 - \frac{t}{2}}^{x_0 - \frac{t}{2} + \frac{y}{2^{k+1}}} \log\frac{\omega(I(x, \frac{y}{2^k}))}{\omega(x - \frac{y}{2^k}, x)} dx + \int_{x_0 + \frac{t}{2}}^{x_0 + \frac{t}{2} + \frac{y}{2^{k+1}}} \log\frac{\omega(x - \frac{y}{2^k}, x)}{\omega(I(x, \frac{y}{2^k}))}dx.
\end{align*}
By using the statement $(b)$ in Lemma \ref{vanish} as above, we have that
 \begin{equation}\label{log}
 \left|\log\frac{\omega(I(x, \frac{y}{2^k}))}{\omega(x - \frac{y}{2^k}, x)}\right| \leq \log(1+\varepsilon)\leq 4\varepsilon,
 \end{equation}
which implies that
\begin{equation}\label{above}
\left|\frac{F_k(y)}{y} \right|\leq 2^{-(k-1)}\varepsilon. 
\end{equation}

Consequently, we conclude that
$$
|\widehat{A_3}(x_0, t)| = \sum_{k=1}^{\infty}\frac{1}{t}\int_{\frac{t}{2}}^{t}| F_k(y) |\frac{dy}{y}\leq \sum_{k=1}^{\infty}\frac{\varepsilon}{2^{k}}=\varepsilon
$$
for any $x_0 \in \mathbb R$ and any $t \in (0, \delta)$. 
This completes the proof of Claim \ref{c3}. 
\end{proof}

\begin{proof}[Proof of Theorem 1]
Suppose $\omega$ is an $A_\infty$-weight on $\mathbb{R}$ and $\log\omega \in {\rm VMO}(\mathbb R)$. We have $\lim_{N\to\infty}\widehat{A_2}(x_0, t) = 0$ by Claim \ref{c1} and the doubling weight $\omega$ is vanishing on $\mathbb R$ by Remark \ref{examin}. Then $\widehat{A_1}(x_0, t) \to 0$, $\widehat{A_3}(x_0, t) \to 0$ and $A_4(x_0,t)\to 0$ uniformly for $x_0 \in \mathbb R$ as $t \to 0$ by Claims \ref{c2} and \ref{c3} and Proposition \ref{limit}, respectively. Thus, $A(x_0, t) \to 0$ uniformly for $x_0 \in \mathbb R$ as $t \to 0$; namely, the Carleson measure $\eta (z) y^{-1} dxdy$ is vanishing on $\mathbb R_+^2$.

Now we prove the sufficiency. Suppose $\omega$ is an $A_\infty$-weight on $\mathbb{R}$, then $$\lim_{N\to\infty}\widehat{A_2}(x_0, t) = 0$$ by Claim \ref{c1}. Since the doubling weight $\omega$ is vanishing on $\mathbb R$, we have $\widehat{A_1}(x_0, t) \to 0$ and $\widehat{A_3}(x_0, t) \to 0$ uniformly for $x_0 \in \mathbb R$ as $t \to 0$ by Claims \ref{c2} and \ref{c3}, respectively. Combining with the condition that the Carleson measure $\eta (z) y^{-1} dxdy$ is vanishing on $\mathbb R_+^2$, we see $A_4(x_0, t) \to 0$ uniformly for $x_0 \in \mathbb R$ as $t \to 0$. It follows from Proposition \ref{limit} again that $\log \omega \in {\rm VMO}(\mathbb R)$. This completes the proof of Theorem \ref{main}. 
\end{proof}


\begin{thebibliography}{99}
\bibitem{AZ} K. Astala and M. Zinsmeister, Teich\-m\"ul\-ler spaces and BMOA, Math. Ann. 289 (1991), 613--625.
\bibitem{CZ} G. Cui and M. Zinsmeister, BMO-Teichm\"uller spaces, IIIinois J. Math. 48 (2004), 1223--1233.
\bibitem{Dah} B. Dahlberg, On the absolute continuity of elliptic measures, Amer. J. Math. 108 (1986), 1119--1138.
\bibitem{FKP} R.A. Fefferman, C.E. Kenig and J. Pipher, The theory of weights and the Dirichlet problems for elliptic equations, Ann. of Math. 134 (1991), 65--124.
\bibitem{FM} C. Fefferman and B. Muckenhoupt, Two nonequivalent conditions for weight functions, Proc. Amer. Math. Soc. 45 (1974), 99--104.
\bibitem{Ga} J.B. Garnett, Bounded Analytic Functions, Academic Press, New York, 1981.
\bibitem{GN} M.J. Gonz\'alez and A. Nicolau, Doubling properties of $A_{\infty}$, J. Fourier Anal. Appl. 8(6) (2002), 613--618.
\bibitem{Hr} S.V. Hru\v{s}\v{c}ev, A description of weights satisfying the {$A_{\infty }$} condition of Muckenhoupt, Proc. Amer. Math. Soc. 90 (1984), 253--257.
\bibitem{Jo} P.W. Jones, Homeomorphisms of the line which preserve BMO, Ark. Mat. 21 (1983), 229--231.
\bibitem{Mi} T. Mitsis, A characterization of vanishing mean oscillation, Monatsh. Math. 149 (2006), 337.
\bibitem{M} B. Muckenhoupt, Weighted norm inequalities for the Hardy maximal function, Trans. Amer. Math. Soc. 165 (1972), 207--226.
\bibitem{Sa} D. Sarason, Functions of vanishing mean oscillation, Tran. Amer. Math. Soc. 207 (1975), 391--405.
\bibitem{Se1} S. Semmes, The Cauchy integral, chord-arc curves, and quasiconformal mappings, Proc. Bieberbach Conf. (Purdue University, 1985) (A. Baernstein, P. Duren, A. Marden, and D. Drasin, eds.), Math. Surveys, no. 21, Amer. Math. Soc., Providence, R. L. 1986.
\bibitem{Se} S. Semmes, Quasiconformal mappings and chord-arc curves, Trans. Amer. Math. Soc. 306 (1988), 233--263.
\bibitem{Sh18} Y. Shen, Weil--Petersson Teichm\"uller space, Amer. J. Math. 140 (2018), 1041--1074.
\bibitem{St} E. Stein, Harmonic Analysis: Real-Variable Methods, Orthogonality, and Oscillatory Integrals, Princeton Univ. Press, 1993.
\bibitem{WM} H. Wei and K. Matsuzaki, Beurling--Ahlfors extension by heat kernel, ${\rm A}_\infty$-weights for VMO, and vanishing Carleson measures, Bull. London Math. Soc. 53 (2021), 723--739.
\bibitem{WM1} H. Wei and K. Matsuzaki, Strongly symmetric homeomorphisms on the real line with uniform continuity, to appear in Indiana Univ. Math. J., arXiv: 2207. 10468.
\end{thebibliography}
\end{document}